\documentclass[11pt]{amsart}

\usepackage[T1]{fontenc}
\usepackage{lmodern}
\usepackage{microtype}
\usepackage{amsmath,amssymb,amsthm,mathtools}

\usepackage[
colorlinks=true,
linkcolor=red,
citecolor=blue,
urlcolor=blue
]{hyperref}

\usepackage[a4paper,margin=1.15in]{geometry}

\newcommand{\bp}{\bar{\partial}}
\newcommand{\C}{\mathbb C}
\newcommand{\Z}{\mathbb Z}
\newcommand{\OO}{\mathcal O}
\newcommand{\ddbar}{\partial\bar\partial}

\numberwithin{equation}{section}

\newtheorem{theorem} {Theorem} [section]
\newtheorem{bigthm}{Theorem}
 
\newtheorem{proposition}[theorem]{Proposition}

\newtheorem{lemma}  [theorem]     {Lemma}
\theoremstyle{definition}
\newtheorem{example}  [theorem]     {Example}
\newtheorem{definition}  [theorem]     {Definition}
\theoremstyle{plain}
\newtheorem{question}[theorem]{Question}

\theoremstyle{plain}

\theoremstyle{definition}
\newtheorem{remark}  [theorem]     {Remark}

\newtheoremstyle{stepstyle}
{8pt}          
{8pt}          
{\itshape}     
{0pt}          
{\bfseries}    
{.}            
{.5em}         
{}

\theoremstyle{stepstyle}
\newtheorem{step}{Step}[section]

\title[A numerically flat rank-two bundle on a $\ddbar$-threefold]
{A Numerically Flat Rank-Two Bundle without a Holomorphic Connection
	on a $\ddbar$-Threefold}
\let\oldmaketitle\maketitle
\renewcommand\maketitle{{\bfseries\boldmath\oldmaketitle}}	

\author{Tianzhi Hu}
\address{ School of Mathematics and Statistics, Wuhan University, Luojiashan, Wuchang, Wuhan, Hubei, 430072, P.R. China}
\email{hutianzhi@whu.edu.cn}

\author{Runze Zhang}
\address{The Institute of Mathematical Sciences and Department of Mathematics, The Chinese University of
	Hong Kong, Shatin, N.T., Hong Kong}
\email{runzezhang@cuhk.edu.hk, runze.zhang@unice.fr}
\urladdr{\href{https://sites.google.com/view/runzezhang}{https://sites.google.com/view/runzezhang}}

\date{\today}

\begin{document}
	\begin{abstract}
		We construct a numerically flat holomorphic vector bundle of rank two
		on a compact complex threefold satisfying the ordinary $\ddbar$-lemma
		and prove that it admits no holomorphic connection.
		This gives, in particular, a negative answer to a question posed by
		Cao--Deng--Matsumura.
		In contrast, for compact simply connected complex manifolds $X$,
		we prove that the answer is affirmative if
		$H^1(X,\mathcal O_X)=0$, and hence if the Fr\"olicher spectral
		sequence of $X$ degenerates at $E_1$.
	\end{abstract}
	\maketitle
	\section{Introduction}
	Very recently, Cao--Deng--Matsumura have extended a significant
	characterization of numerically flat vector bundles on compact
	K\"ahler manifolds due to Demailly--Peternell--Schneider
	\cite[Theorem 1.18]{DPS} to the \textit{non-K\"ahler setting},
	giving an affirmative answer to the question raised in
	\cite[Remark 1.21]{DPS}.
	
	\begin{theorem}[{\cite[Theorem 1.3]{CDM}}]\label{numflatnonkahler}
		Let $X$ be a compact complex manifold and let $E\to X$ be a
		holomorphic vector bundle.
		Assume that $E$ is numerically flat, namely that $c_1(E)=0$
		and $\mathcal O_{\mathbb P(E)}(1)$ is nef.
		Then $E$ admits a filtration by holomorphic subbundles
		\begin{equation}\label{filtra1}
			\{0\}=E_0\subset E_1\subset\cdots\subset E_k=E
		\end{equation}
		such that each graded quotient $Q_i:=E_i/E_{i-1}$ is Hermitian flat.
		In particular, all Chern classes $c_k(E)$ vanish.
	\end{theorem}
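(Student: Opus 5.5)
We follow the strategy of Demailly--Peternell--Schneider \cite[Theorem 1.18]{DPS} and replace each Kählerian input by a non-Kähler substitute. The argument is an induction on the rank $r$ of $E$.

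\emph{Step 1: flatness in the absence of subsheaves.} Fix a Hermitian (or Gauduchon) metric $\omega$ on $X$. Nefness of $\mathcal O_{\mathbb P(E)}(1)$ gives, for every $\varepsilon>0$, a metric $h_\varepsilon$ on $E$ whose curvature satisfies
\[
i\Theta_{h_\varepsilon}(E)\ge -\varepsilon\,\omega\otimes \mathrm{Id}_E
\]
in the sense of Griffiths. The condition $c_1(E)=0$ is taken in Bott--Chern cohomology. We use it to normalize the determinant metric so that $\mathrm{tr}\, i\Theta_{h_\varepsilon}$ is $\ddbar$-exact. Integrating against $\omega^{n-1}$ then shows that the curvature tends to $0$ in $L^1$, and in fact in $L^2$ after the usual trace argument. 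Now suppose $E$ has no nontrivial coherent subsheaf $\mathcal F$ whose degree with respect to $\omega$ is at least $0$. In that case we would solve a Hermitian--Yang--Mills type equation, using Li--Yau / Buchdahl on Gauduchon manifolds. The uniform almost-nonnegativity forces $E$ to be $\omega$-stable of degree $0$, so it carries a Hermitian--Einstein metric with vanishing Einstein constant. The Lübke inequality combined with vanishing of the limiting curvature then gives a Hermitian flat metric.

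\emph{Step 2: the inductive step.} When Step 1 fails, we produce a subbundle. Take a minimal-rank subsheaf $\mathcal F\subset E$ with $\deg_\omega \mathcal F\ge 0$, or equivalently a nonzero section of some $\Lambda^p E\otimes L$ with $L$ of nonpositive degree. We want to show that $\mathcal F$ is a subbundle with $\det\mathcal F$ numerically trivial. Quotients of nef bundles are nef, so $\det(E/\mathcal F)$ is nef. Then $\det\mathcal F=\det E\otimes \det(E/\mathcal F)^{-1}$, and since $\deg_\omega \mathcal F\ge 0$, a Gauduchon-degree argument forces $\deg\det(E/\mathcal F)=0$. A nef line bundle of degree $0$ is numerically flat, and hence Hermitian flat with unitary holonomy. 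The induced map $\det\mathcal F\to\Lambda^pE$ is a section of $\Lambda^pE\otimes(\det\mathcal F)^{-1}$, which is again numerically flat. We then apply the DPS lemma: any nonzero section of a numerically flat bundle vanishes nowhere. That lemma's proof is a local maximum-principle argument with the $\varepsilon$-metrics and needs no Kähler hypothesis. It shows that $\mathcal F$ is saturated and is a subbundle. Both $\mathcal F$ and $E/\mathcal F$ then have $c_1=0$ and are nef, hence numerically flat. Induction on rank yields the filtration \eqref{filtra1}. Vanishing of all Chern classes follows from the Whitney formula applied to the flat graded pieces, or from Chern--Weil.

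\emph{Main obstacle.} The hard part is Step 1 without a Kähler metric. Degree with respect to a Gauduchon metric is well defined, but the pairing of $\omega^{n-1}$ with $c_1$ and with $c_2$-type quantities fails. The Bogomolov--Lübke argument uses $\int c_2\wedge\omega^{n-2}$, and this is no longer topological. Our first attempt will be to avoid $c_2$ entirely. The idea is to show directly that the stable case gives an irreducible unitary local system. We would do this via the limiting $L^2$-flatness of the $h_\varepsilon$ together with a Uhlenbeck-type compactness on the stable bundle, getting a flat limit connection whose $(0,1)$-part is gauge equivalent to $\bp_E$.
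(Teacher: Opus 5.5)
The paper does not prove this statement itself; it quotes it from Cao--Deng--Matsumura, so your sketch has to stand on its own. Your Step~2, the Demailly--Peternell--Schneider reduction, does carry over once $\omega$ is taken Gauduchon, with one correction. Before you know $\mathcal F$ is a subbundle, $E/\mathcal F$ is only torsion free. As a generic quotient of $\Lambda^{r-p}E$, $\det(E/\mathcal F)$ is then pseudo-effective, not nef. That suffices: pseudo-effective with zero Gauduchon degree still forces $c_1^{\mathrm{BC}}=0$, hence Hermitian flatness. The non-vanishing lemma for sections can also be proved without a K\"ahler metric. The genuine gap is Step~1, which carries the entire non-K\"ahler content of the theorem.

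First, its starting point is not available. Nefness of $\OO_{\mathbb P(E)}(1)$ gives metrics on $\OO_{\mathbb P(E)}(1)$, i.e.\ Finsler metrics on $E^*$. It is not known in general whether a nef bundle carries Hermitian metrics $h_\varepsilon$ with Griffiths curvature $\ge-\varepsilon\,\omega\otimes\mathrm{Id}_E$; this is a Griffiths-type problem. Even granting such metrics, bounding a Griffiths-semipositive curvature by its trace gives only $L^1$-decay, not $L^2$.

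Second, and more seriously, nothing links the $h_\varepsilon$ to the Hermitian--Einstein metric $h$ supplied by Li--Yau/Buchdahl. L\"ubke's pointwise inequality gives flatness of $h$ only once you know $\int_X\bigl(2r\,c_2-(r-1)c_1^2\bigr)(h)\wedge\omega^{n-2}\le 0$. For Gauduchon $\omega$ this integral depends on the metric: changing $h$ changes $c_2$ by $i\ddbar$ of a transgression form, and $\ddbar\omega^{n-2}\neq0$ in general. Moreover, $c_2(E)=0$ is itself one of the conclusions you are trying to reach.

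Your fallback does not close this gap. For $n\ge3$, Uhlenbeck gauge fixing and compactness need curvature control in $L^p$ with $p>n$, or smallness at the critical exponent $n$, so $L^2$-smallness is not enough. Even a convergent subsequence would only give a flat structure in the closure of the complex gauge orbit of $\bp_E$, which you would still have to identify with $E$.

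So the implication ``stable and numerically flat $\Rightarrow$ Hermitian flat without a K\"ahler metric'' remains a plan rather than a proof. That is exactly the new input the cited result has to supply.
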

	
	They then propose the following question:
	\begin{question}[{\cite[Question 5.4]{CDM}}]\label{Q_1}
		Let $X$ be an arbitrary compact complex manifold and let
		$E\to X$ be a holomorphic vector bundle.
		If $E$ is numerically flat, does $E$ admit a flat holomorphic
		connection that is furthermore compatible with the
		filtration \eqref{filtra1}?
	\end{question}

	Notice that Question~\ref{Q_1} has a positive answer if $X$ is
	K\"ahler \cite{Simpson,Deng}, or more generally if $X$ lies in
	\textit{Fujiki's class $\mathcal C$}, i.e., if $X$ is bimeromorphic
	to a compact K\"ahler manifold (in which case we simply call $X$
	a \textit{Fujiki manifold}) \cite{Biswas}.
	Given Theorem~\ref{numflatnonkahler}, a unified proof of these
	affirmative answers uses the Hermitian flat-twisted
	$\ddbar$-lemma (or simply the flat-twisted $\ddbar$-lemma),
	which we recall below.
	
	\medskip
	\begin{definition}
		Let $(F,h_F)$ be a Hermitian flat holomorphic vector bundle
		over an arbitrary compact complex manifold $X$, with flat
		Chern connection
		$\nabla_F=\nabla_F^{1,0}+\bar\partial_F$.
		We say that the pair $(X,F)$ satisfies the
		\emph{flat-twisted $\ddbar$-lemma} if the
		$\nabla_F^{1,0}\bar\partial_F$-lemma holds for $(X,F)$.
		That is, for every $\nabla_F$-closed $F$-valued $(p,q)$-form
		$\alpha$ on $X$, the following are equivalent:
		$$
		\nabla_F\text{-exactness}
		\;\Longleftrightarrow\;
		\nabla_F^{1,0}\text{-exactness}
		\;\Longleftrightarrow\;
		\bar\partial_F\text{-exactness}
		\;\Longleftrightarrow\;
		\nabla_F^{1,0}\bar\partial_F\text{-exactness}.
		$$
	\end{definition}

	The validity of the above lemma for \textit{every} Hermitian
	flat holomorphic vector bundle $F$ on a Fujiki manifold $X$
	is well known to experts; nevertheless, we include a proof
	in Proposition~\ref{Fujiki} in Appendix~\ref{appendix}.
	Note that Angella--Kasuya \cite{AKtwisted} studied a related
	$\ddbar$-lemma for twisted differentials associated with
	rank-one Higgs bundles.
	When $F$ has rank one and its connection can be written in
	a global smooth unitary frame as
	$\nabla_F=d+(\theta_2-\bar\theta_2)\wedge$,
	where $\theta_2$ is a global $d$-closed $(1,0)$-form,
	the above definition is precisely the special case
	$\theta_1=0$ of theirs.
	
	\medskip
	We emphasize that the \textit{(ordinary) $\ddbar$-lemma}
	is the special case $F=\mathcal O_X$, equipped with its
	trivial flat connection $d$, and we say that a compact complex
	manifold is a \textit{$\ddbar$-manifold} if it satisfies
	the $\ddbar$-lemma.

	To construct a flat holomorphic connection on $E$
	compatible with the filtration \eqref{filtra1}, however,
	the argument uses coefficients in the Hermitian flat bundles
	$\operatorname{Hom}(Q_j,Q_i)$.
	This distinction is important when considering
	Question~\ref{Q_1} beyond the Fujiki setting.

	\medskip
	We  recall a construction of Biswas--Florentino, 
	which already gives a negative answer to Question \ref{Q_1}
	on a compact complex manifold $X$ that is \textit{not a $\ddbar$-manifold}.
	The numerically flat bundle constructed in their example admits
	no holomorphic connection at all, let alone a flat one compatible
	with the filtration.
	
	\begin{example}[{\cite{BF}}]\label{BF-eg}
		Let $M_{u,v}$ be a Calabi--Eckmann manifold of complex dimension
		$u+v+1$, diffeomorphic to the product of two odd-dimensional spheres
		$S^{2u+1}\times S^{2v+1}$, as constructed by Calabi--Eckmann
		\cite{CE53}, where $u,v\geq0$ and $u+v\geq1$.
		When $u=0$ or $v=0$, it reduces to a Hopf manifold.
		In this case, the Fr\"olicher spectral sequence degenerates at
		the $E_1$-page, but $M_{u,v}$ is not a $\ddbar$-manifold; see \cite[Theorem 3]{Mal91}.
		On the other hand, when $uv>0$, the Fr\"olicher spectral sequence
		does not degenerate at the $E_1$-page; see
		\cite[Remark 4.73]{FOT08}.
		
		\medskip
		\textit{We now assume $u,v\geq1$ and set $X:=M_{u,v}$}, which is then
		simply connected.
		Since $H^1(X,\mathcal O_X)\simeq\mathbb C$ \cite{Hofer},
		following Biswas--Florentino \cite{BF}, we choose a nonzero
		extension class and the corresponding nonsplit extension
		$$
		0\longrightarrow\mathcal O_X
		\longrightarrow E
		\overset{p}{\longrightarrow}\mathcal O_X
		\longrightarrow0.
		$$
		Applying \cite[Proposition 1.15 (ii)]{DPS} to this sequence and
		its dual shows that both $E$ and $E^*$ are nef (see Lemma \ref{lem:extension-nf}).
		Thus $E$ is numerically flat. 
		
		\medskip
		Indeed, \textit{$E$ admits no holomorphic connection}.
		Suppose otherwise, and let $D$ be such a connection.
		The extension and its dual, together with
		$H^0(X,\Omega_X^2)=0$ \cite{Hofer}, give
		$
		H^0\bigl(X,\Omega_X^2\otimes\operatorname{End}(E)\bigr)=0.
		$
		Hence $D$ is flat and, since $X$ is simply connected,
		$E\simeq\mathcal O_X^{\oplus2}$.
		Under this trivialization, $p$ is a surjection
		$\mathcal O_X^{\oplus2}\to\mathcal O_X$ given by constants,
		as $X$ is compact and connected.
		It therefore has a constant right inverse, contradicting
		the nonsplitting of the extension.
	\end{example}
	
	In Example~\ref{BF-eg}, the relevant coefficient bundle is
	$\operatorname{Hom}(\mathcal O_X,\mathcal O_X)\simeq\mathcal O_X$,
	so the flat-twisted $\ddbar$-lemma reduces to the ordinary one,
	which already fails on $X$.

	\textit{It is therefore natural to ask whether the ordinary
		$\ddbar$-lemma alone suffices to give an affirmative answer
		to Question~\ref{Q_1}.}
	In complex dimension two, the answer is affirmative:
	every compact $\ddbar$-surface has even first Betti number
	and is therefore K\"ahler by e.g. \cite[Theorem~11]{Buchdahl}.
	Our main result shows that this is no longer true in general
	in higher dimensions, already in dimension three.
	
	\begin{bigthm}\label{thm:main}
		There exist a $\ddbar$-threefold $X$  and a numerically flat holomorphic bundle $E\rightarrow X$
		of rank two which admits no holomorphic connection.
	\end{bigthm}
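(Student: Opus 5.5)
The plan is to realize the failure predicted by the discussion after Example~\ref{BF-eg}: the ordinary $\ddbar$-lemma holds on $X$, but the flat-twisted $\ddbar$-lemma fails for some Hermitian flat line bundle $L$. We then build $E$ as an extension of $\mathcal O_X$ by $L$ whose extension class detects this failure. The main technical input, and the step I expect to be the main obstacle, is computing enough twisted cohomology on a suitable non-Fujiki threefold.

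\textbf{Choice of $X$.} For $X$ I would take a complex solvmanifold $\Gamma\backslash G$ with $G=\mathbb C\ltimes_\phi\mathbb C^2$ and $\phi(z)=\mathrm{diag}(e^{az},e^{-az})$, where $a$ is chosen so that a lattice exists and the holomorphically parallelizable structure is avoided. Such solvmanifolds (Kasuya, Angella--Kasuya) have Dolbeault and de Rham cohomology computed by finite-dimensional subcomplexes of forms $\chi\cdot(\text{invariant form})$, where $\chi$ ranges over characters of $\Gamma$. The first step is to verify the ordinary $\ddbar$-lemma using such a model: check that the Bott--Chern, Aeppli, Dolbeault and de Rham dimensions match, via the Angella--Tomassini inequality. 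This is a finite linear-algebra check on the model complex.

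\textbf{Choice of $L$ and a twisted class.} Inside the same model I would pick a unitary character of $\Gamma$, giving a Hermitian flat line bundle $L$ with $\nabla_L=d+(\theta-\bar\theta)\wedge$ for a closed invariant $(1,0)$-form $\theta$. The character should be one for which the twisted complex changes: there should be a class $e\in H^{0,1}_{\bar\partial_L}(X,L)\simeq\operatorname{Ext}^1(\mathcal O_X,L)$ whose $\nabla^{1,0}_L$-image is not $\bar\partial_L$-exact. More precisely, I want
$$[\nabla_L^{1,0}\varepsilon]\notin \bar\partial_L\text{-exact}+H^0(X,\Omega^1_X)\cdot\varepsilon$$
in $H^{1,1}(X,L)$, where $\varepsilon$ represents $e$. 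This is exactly a failure of the $\nabla^{1,0}_L\bar\partial_L$-lemma. I would also arrange that $H^0(X,\Omega^1_X\otimes L^{-1})=0$, again read off from the model.

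\textbf{Construction of $E$.} Let $0\to L\to E\to\mathcal O_X\to 0$ be the extension with class $e$. By Lemma~\ref{lem:extension-nf} (extensions of numerically flat bundles, via \cite[Proposition 1.15]{DPS}), $E$ is numerically flat.

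\textbf{No holomorphic connection.} Suppose $D$ is a holomorphic connection on $E$. The composite $L\to E\xrightarrow{D}\Omega^1\otimes E\to\Omega^1\otimes\mathcal O_X$ is $\mathcal O$-linear, so it is a section of $\Omega^1\otimes L^{-1}$, hence zero. Therefore $D$ preserves $L$ and induces holomorphic connections on $L$ and on $\mathcal O_X$. These induced connections differ from $\nabla^{1,0}_L$ and $d$ by holomorphic $1$-forms.

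Now write $E$ smoothly as $L\oplus\mathcal O$ with $\bar\partial_E=\begin{pmatrix}\bar\partial_L&\varepsilon\\0&\bar\partial\end{pmatrix}$. A holomorphic connection has off-diagonal term a smooth $(1,0)$-form $\beta$ with values in $L$. Holomorphicity of $D$ forces
$$\nabla^{1,0}_L\varepsilon-(\text{hol.\ }1\text{-form})\wedge\varepsilon=\bar\partial_L\beta,$$
which contradicts the choice of $e$.

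Most of the work lies in the explicit twisted-cohomology computation on the solvmanifold: identifying the character, exhibiting $e$, and checking the non-exactness modulo $H^0(\Omega^1)\cdot\varepsilon$. If the solvmanifold computation does not cooperate, I would try a different $\ddbar$ non-Kähler threefold with computable flat-twisted cohomology.
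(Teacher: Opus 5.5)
Your final reduction is correct. If $H^0(X,\Omega^1_X\otimes L^{-1})=0$, every holomorphic connection on $E$ preserves $L$, and the off-diagonal entry of $[\bp_E,D]=0$ reads $\nabla^{1,0}_L\varepsilon+\theta\wedge\varepsilon=-\bp_L\beta$ with $\theta\in H^0(X,\Omega^1_X)$. So your condition on $e$ does rule out holomorphic connections.

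The genuine gap is that Theorem~\ref{thm:main} is an existence statement, and you never supply the data. You give no character and no class $e$. You give no argument that $\nabla^{1,0}_L\varepsilon$ fails to be $\bp_L$-exact modulo $H^0(X,\Omega^1_X)\cdot\varepsilon$, and no indication of how non-exactness of an $L$-valued $(1,1)$-form could be certified. What you call the main obstacle is the entire content of the theorem.

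Even the proposed $X$ does not work as written. With $\phi(z)=\operatorname{diag}(e^{az},e^{-az})$ holomorphic in $z$, the group law is holomorphic, so $G$ is a complex Lie group and $\Gamma\backslash G$ is complex parallelizable for every $a$. On a $\ddbar$-manifold holomorphic $1$-forms are $d$-closed. For a complex parallelizable quotient this forces the Lie algebra to be abelian, so such an $X$ is a $\ddbar$-manifold only if it is a torus. You need a twist depending on $\bar z_3$ as well, like the paper's $\alpha_1=e^{-(A-i)z_3-(A+i)\bar z_3}$ with $A=-2i$ and Rubini's lattice. There the $\ddbar$-lemma is imported from Kasuya, Rubini, Angella--Kasuya and Otal rather than checked by hand.

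The missing idea is an explicit extension class together with a computable obstruction. The paper takes $L$ from the sign character $\chi(g_u^mg_v^n)=(-1)^m$ and $\eta=e^{-2iz_3}d\bar z_1+e^{2iz_3}d\bar z_2$. This satisfies $g_u^*\eta=-\eta$ and $g_v^*\eta=\eta$, so it is a $\bp_L$-closed $L$-valued $(0,1)$-form.

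The paper then argues locally over the base, on $p^{-1}(\Delta)\simeq\Delta\times T$ in a flat frame. The relation $[D_{\partial/\partial z_3},\bp_j]=0$ gives, on the $e_1$-coefficient, $\partial a_{12}/\partial\bar z_j+(a_{22}-a_{11})f_j=f_j'$ for $j=1,2$. Multiply the $j=2$ equation by $f_1$ and the $j=1$ equation by $f_2$, then subtract; this eliminates the unknown $a_{22}-a_{11}$. Integrating over the torus fiber then yields $f_1f_2'-f_2f_1'=0$, contradicting the value $4i$. This argument needs neither $H^0(X,\Omega^1_X\otimes L^{-1})=0$, nor knowledge of $H^0(X,\Omega^1_X)$, nor any twisted cohomology computation.

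Your framework does accommodate this example. One checks $H^0(X,\Omega^1_X\otimes L^{-1})=0$ and $H^0(X,\Omega^1_X)=\C\,dz_3$. Fiber-integrating the $dz_3\wedge d\bar z_j$-components of $\partial\eta+c\,dz_3\wedge\eta$ then forces both $c=2i$ and $c=-2i$. But an explicit choice of this kind, plus a fiber-integration argument, is exactly what your proposal is missing.
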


	More explicitly, we choose a compact $\partial\bar\partial$-threefold
	$X$ among the manifolds studied by Rubini \cite[\S2.3]{Rubini}.
	It admits a holomorphic fibration $p:X\to B$ over an elliptic curve
	$B$, with two-dimensional complex tori as fibers.
	We construct an extension
	$$0\longrightarrow L\longrightarrow E \longrightarrow \mathcal O_X\longrightarrow  0$$ with $L$ Hermitian flat,
	so that $E$ is numerically flat.
	
	Locally over $B$, after choosing a flat holomorphic frame of $L$,
	the extension is defined by
	$$
	\eta=f_1(z_3)d\bar z_1+f_2(z_3)d\bar z_2,
	$$
	where $z_3$ is a local coordinate on $B$ and $z_1,z_2$ are
	local coordinates along the torus fibers.
	We take $f_1=e^{-2iz_3}$ and $f_2=e^{2iz_3}$, so that
	$$
	f_1f_2'-f_2f_1'=4i.
	$$
	Integration over a torus fiber shows that a holomorphic connection
	would force this expression to vanish.
	Hence $E$ admits no holomorphic connection.
	
	\medskip
	\medskip
	Inspired by Example \ref{BF-eg}, we prove
	\begin{bigthm}
		\label{thm:simply-connected}
		Question \ref{Q_1} has a positive answer for any compact
		simply connected complex manifold $X$ with
		$H^1(X,\mathcal O_X)=0$.
		The vanishing condition holds whenever the Fr\"olicher spectral
		sequence of $X$ degenerates at $E_1$, in particular when $X$
		is a $\ddbar$-manifold.
	\end{bigthm}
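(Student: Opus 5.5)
We will show that on a compact simply connected $X$ with $H^1(X,\mathcal O_X)=0$, every numerically flat $E$ is in fact holomorphically trivial, $E\simeq\mathcal O_X^{\oplus r}$. Then the trivial connection $d$ is a flat holomorphic connection. It is compatible with the filtration \eqref{filtra1}, because each $E_i$ turns out to be a trivial subbundle spanned by constant sections.

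The argument starts from Theorem~\ref{numflatnonkahler}, which gives a filtration whose graded pieces $Q_i$ are Hermitian flat. A Hermitian flat bundle is given by a unitary representation of $\pi_1(X)=1$, so each $Q_i\simeq\mathcal O_X^{\oplus r_i}$ holomorphically, with its flat Chern connection equal to $d$ in a constant frame. We then argue by induction on $i$ that $E_i\simeq\mathcal O_X^{\oplus \operatorname{rk}E_i}$. Assume $E_{i-1}$ is trivial. The extension
$$0\to E_{i-1}\to E_i\to Q_i\to0$$
is classified by an element of
$$\operatorname{Ext}^1(Q_i,E_{i-1})\simeq H^1\bigl(X,\operatorname{Hom}(Q_i,E_{i-1})\bigr)\simeq H^1(X,\mathcal O_X)^{\oplus r_ir'}=0.$$
So the sequence splits, and $E_i\simeq E_{i-1}\oplus Q_i$ is trivial.

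For compatibility, choose the trivializations inductively. The global constant frame of $E_{i-1}$ extends, via a holomorphic splitting, to a global frame of $E_i$. The sections of this frame are global holomorphic sections of $E=\mathcal O_X^{\oplus r}$, which are constant because $X$ is compact and connected. Hence each $E_i$ is the span of constant vectors, and it is preserved by $d$. This final frame is the same one used to define $d$ on $E$, so there is no ambiguity.

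For the second sentence of the statement, recall that $E_1$-degeneration of the Frölicher spectral sequence gives $b_1(X)=\sum_{p+q=1}h^{p,q}$. Simple connectivity gives $b_1=0$, and therefore $h^{0,1}=\dim H^1(X,\mathcal O_X)=0$. The $\ddbar$-lemma implies $E_1$-degeneration, by Deligne--Griffiths--Morgan--Sullivan. The main step needing care is the identification of Hermitian flat bundles on a simply connected manifold with holomorphically trivial ones. This follows from the Riemann--Hilbert correspondence for flat unitary bundles: a global parallel frame exists, and its sections are holomorphic because the connection's $(0,1)$-part is $\bar\partial_F$. Everything else is formal.
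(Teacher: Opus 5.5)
Your proposal is correct and follows the paper's proof essentially step for step: the Cao--Deng--Matsumura filtration, triviality of each Hermitian flat $Q_i$ via parallel transport on the simply connected $X$, inductive splitting from $\operatorname{Ext}^1(Q_i,E_{i-1})\simeq H^1(X,\mathcal O_X)^{\oplus r_i s_i}=0$, and $b_1=h^{1,0}+h^{0,1}$ under $E_1$-degeneration. Your trivial connection $d$ on $E\simeq\mathcal O_X^{\oplus r}$, taken in the inductively built frame, is exactly the paper's direct sum of the trivial connections on the $Q_i$.
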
 
	
	Examples illustrating this result beyond Fujiki's class
	$\mathcal C$ are given in \ref{cle} and \ref{twistor}.
	
	\vspace{.5cm}
	\noindent\textbf{Acknowledgements:}
	The authors are grateful to Professors Hisashi Kasuya and Shin-ichi Matsumura  for many valuable comments. 
	
	\vspace{.5cm}
	\noindent\textbf{AI Declaration:} 
	ChatGPT-6 brought the example of Biswas--Florentino \cite{BF}
	to the authors' attention.
	Motivated by this example, the authors raised the question of
	whether numerical flatness implies the existence of a flat
	holomorphic connection on a $\ddbar$-manifold and developed
	the main ideas of this paper.
	Artificial intelligence tools were also used for calculations,
	verification of intermediate results, and polishing of the writing.
	All mathematical statements have been independently checked
	by the authors, who take full responsibility for the content.
	
	\section{Proof of Theorems \ref{thm:main} and \ref{thm:simply-connected}}
	
	\begin{proof}[{Proof of Theorem \ref{thm:main}}]
		The proof  is constructive and proceeds in the following three steps:
		
		\begin{step}
			Construction of the $\ddbar$-threefold.
		\end{step}
		We choose $X$ to be a particular member of the family of compact
		complex threefolds associated with the six-dimensional real solvable
		Lie algebra $\mathfrak g_8$, studied by Rubini \cite[\S2.3]{Rubini}.
		For earlier work on manifolds in this family, see
		\cite{Kasuya,Otal,FOU,AK}.  Following \cite[equation (11), p. 8]{Rubini}, take
		coordinates $(z_1,z_2)$ on $\C^2$ and $z_3$ on $\C$, and set
		$$
		G=\C\ltimes_{\varphi_A}\C^2,\quad
		\varphi_A(z_3)=\operatorname{diag}(\alpha_1(z_3),\alpha_2(z_3)),
		$$
		$$
		\alpha_1(z_3)=e^{-(A-i)z_3-(A+i)\bar z_3},\quad
		\alpha_2=\alpha_1^{-1}.
		$$
		Here $\ltimes$ denotes a \emph{semidirect product}: the underlying space is
		$\C\times\C^2$, and multiplication is
		$$
		(a,w)\cdot(z_3,z)=(a+z_3,w+\varphi_A(a)z).
		$$
		With the usual complex coordinates, every left translation is holomorphic.
		
		Take
		\begin{equation*}\label{eq:parameters}
			A=-2i,\quad n=3,\quad n'=-1,\quad
			\lambda=\frac{3+\sqrt5}{2},\quad \zeta=e^{2\pi i/3}.
		\end{equation*}
		The corresponding base lattice in \cite[Table 1, p. 8]{Rubini} is
		\begin{equation}\label{eq:base-lattice}
			\Gamma'=u\Z\oplus v\Z,\quad
			u=-\frac\pi6,\quad v=\frac{i\log\lambda}{2}.
		\end{equation}
		Choose the fiber lattice
		\begin{equation*}\label{eq:fiber-lattice}
			\begin{aligned}
				\Gamma''=\operatorname{Span}_{\Z}\bigl\{(1,1),\ (\lambda,\lambda^{-1}),\ (\zeta,\zeta^{-1}),\ (\lambda\zeta,\lambda^{-1}\zeta^{-1})\bigr\}
				\subset\C^2.
			\end{aligned}
		\end{equation*}
		The four generators are linearly independent over $\mathbb R$, so
		$$
		T:=\C^2/\Gamma''
		$$
		is a compact complex torus. The identities
		$\lambda^2-3\lambda+1=0$ and $\zeta^2+\zeta+1=0$ show that
		$$
		P=\operatorname{diag}(\lambda,\lambda^{-1}),\quad
		Q=\operatorname{diag}(\zeta,\zeta^{-1})
		$$
		preserve $\Gamma''$, as do their inverses $3I-P$ and $-I-Q$.
		Since
		$$
		\varphi_A(u)=Q^{-1},\quad \varphi_A(v)=P^{-1},
		$$
		both generators of $\Gamma'$ act by automorphisms of the fiber lattice
		$\Gamma''$. Thus
		$$
		\Gamma:=\Gamma'\ltimes_{\varphi_A}\Gamma''
		$$
		is a lattice in $G$. Set
		$$
		X:=\Gamma\backslash G.
		$$
		Then $X$ is a compact complex threefold. Explicitly, it is the quotient
		of $\C_{z_3}\times T$ by
		\begin{equation}\label{eq:deck}
			\begin{aligned}
				g_u(z_3,z_1,z_2)&=(z_3+u,\zeta^{-1}z_1,\zeta z_2),\\
				g_v(z_3,z_1,z_2)&=(z_3+v,\lambda^{-1}z_1,\lambda z_2).
			\end{aligned}
		\end{equation}
		For these parameters, $X$ belongs to 
		\cite[case (i) of Theorem 2.5, p. 9]{Rubini}. By Kasuya's result
		\cite[Corollary 4.2]{Kasuya} and the calculation in
		\cite[Table 7, p. 19]{Rubini}, its Dolbeault cohomology is computed
		by the span of the wedge products of
		$$
		dz_3,\quad d\bar z_3,\quad dz_1\wedge dz_2,\quad
		d\bar z_1\wedge d\bar z_2.
		$$
		The same span computes Bott--Chern cohomology by
		\cite[Theorem 2.16]{AK}.
		Since it consists of $d$-closed forms and is preserved by complex
		conjugation, \cite[Lemma 4.2.13]{Otal} gives the $\ddbar$-lemma for $X$.
		
		Notice also that $X$ however does not belong to Fujiki's class
		$\mathcal C$.
		Indeed, suppose otherwise. Since $X$ is a complex solvmanifold,
		Arapura's theorem \cite[Theorem~9]{Arapura}
		(see also \cite[Theorem~3.3]{AKtwisted})
		implies that $X$ admits a K\"ahler form $\omega$.
		However, the above computation gives
		$H^{1,1}_{\mathrm{BC}}(X)=\mathbb C[i\,dz_3\wedge d\bar z_3]$.
		Since this generator has square zero, we would have
		$0<\int_X\omega^3=0$, a contradiction.
		
		\begin{step}
			The numerically flat extension.
		\end{step}
		Let $$q:\C_{z_3}\times T\to X$$ be the quotient covering.
		By \eqref{eq:deck} and the $\mathbb R$-linear independence of $u,v$
		in \eqref{eq:base-lattice}, the deck transformation group of $q$ is
		$\langle g_u,g_v\rangle\simeq\Z^2$. Define the unitary character
		\begin{equation*}\label{eq:character}
			\chi:\langle g_u,g_v\rangle\longrightarrow\{\pm1\},
			\quad \chi(g_u^m g_v^n)=(-1)^m.
		\end{equation*}
		Let $\pi:L\to X$ be the holomorphic line bundle obtained from
		$(\C_{z_3}\times T)\times\C$ by the identifications
		$$
		(x,s)\sim(gx,\chi(g)s),
		\quad g\in\langle g_u,g_v\rangle,
		$$
		where $x\in\C_{z_3}\times T$ and $s\in\C$, with 
		$\pi([x,s])=q(x)$.
		
		Since $|\chi(g)|=1$, the metric $|s|^2$ descends to a Hermitian
		metric $h$ on $L$. A local holomorphic lift
		$\sigma_U:U\to\C_{z_3}\times T$ of $q$ gives a holomorphic frame
		$e_U(y)=[\sigma_U(y),1]$ with $h(e_U,e_U)=1$. Hence its Chern curvature
		$$
		F_h|_U=-\frac{i}{2\pi}\partial\bar\partial\log h(e_U,e_U)=0,
		$$
		so $L$ is Hermitian flat.
		We then recall the following standard result.
		
		\begin{lemma}\label{lem:extension-nf}
			Let $L$ be a  Hermitian flat line bundle. Then every holomorphic extension
			$$
			0\longrightarrow L\longrightarrow E\longrightarrow\OO_X\longrightarrow0
			$$
			is numerically flat.
		\end{lemma}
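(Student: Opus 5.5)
Numerical flatness means $c_1(E)=0$ and both $E$ and $E^*$ are nef (equivalently, $\mathcal O_{\mathbb P(E)}(1)$ nef together with $c_1(E)=0$). The plan is to verify these conditions directly from the extension, using \cite[Proposition 1.15 (ii)]{DPS}: an extension of a nef bundle by a nef bundle is nef.

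First, $c_1(E)=c_1(L)+c_1(\OO_X)=c_1(L)$, and this class vanishes in de Rham (or Bott--Chern) cohomology. Indeed, $L$ carries a Hermitian metric whose Chern curvature is identically zero, so the representative $\frac{i}{2\pi}\Theta_h(L)$ is the zero form.

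Next comes nefness of $E$. A Hermitian flat line bundle is nef, because its zero-curvature metric trivially satisfies the defining condition: for every $\varepsilon>0$ there is a metric with curvature $\ge -\varepsilon\omega$. The trivial bundle $\OO_X$ is nef for the same reason. Applying \cite[Proposition 1.15 (ii)]{DPS} to
$$0\to L\to E\to \OO_X\to 0$$
then shows that $E$ is nef.

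For $E^*$, dualize the sequence to obtain
$$0\to \OO_X\to E^*\to L^*\to 0.$$
The dual metric on $L^*$ is also flat, so $L^*$ is nef, and the same proposition gives that $E^*$ is nef. Finally, $E$ and $E^*$ nef with $c_1(E)=0$ is the DPS characterization of numerical flatness, which matches the definition of numerical flatness in Theorem~\ref{numflatnonkahler}: the nefness of $E$ is exactly the nefness of $\mathcal O_{\mathbb P(E)}(1)$.

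The only genuine concern is that \cite{DPS} is written for compact K\"ahler manifolds, while $X$ here is not Fujiki. I would check that the proof of Proposition 1.15 (ii) in \cite{DPS} uses only the metric definition of nefness relative to an arbitrary fixed Hermitian metric $\omega$ on $X$. That proof builds metrics on the extension by rescaling the second fundamental form with a small parameter, so it uses no K\"ahler hypothesis. This is consistent with the way \cite{CDM} and the paper already invoke it in Example~\ref{BF-eg}.
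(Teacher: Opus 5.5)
Your proposal is correct and follows the paper's proof: $L$ and $L^*$ are nef because they carry flat metrics, \cite[Proposition 1.15 (ii)]{DPS} applied to the extension shows $E$ is nef, and applied to the dual sequence $0\to\OO_X\to E^*\to L^*\to 0$ it shows $E^*$ is nef. Your explicit check that $c_1(E)=c_1(L)=0$ is a harmless addition, and your remark that the DPS proposition needs no K\"ahler hypothesis is the same point the paper makes in its parenthetical ``(which holds on any compact complex manifold)''.
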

		
		\begin{proof}
			Since $L$ and $L^*$ are Hermitian flat, they are nef. 
			\cite[Proposition 1.15 (ii)]{DPS} (which holds on any compact complex
			manifold) can then be applied to obtain that $E$ is nef. Applying the same result to the dual exact sequence
			\[
			0\longrightarrow\OO_X\longrightarrow E^*
			\longrightarrow L^*\longrightarrow0
			\]
			shows that $E^*$ is also nef. Hence $E$ is numerically flat.
		\end{proof}
		
		We now construct the extension. On $\C_{z_3}\times T$, set
		\begin{equation}\label{eq:eta}
			\eta=e^{-2iz_3}d\bar z_1+e^{2iz_3}d\bar z_2.
		\end{equation}
		By \eqref{eq:deck}, we have
		$$
		g_u^*\eta=-\eta,\quad g_v^*\eta=\eta.
		$$
		Hence $\eta$ descends to an $L$-valued $(0,1)$-form over $X$.
		Since its coefficients are holomorphic in $z_3$, we have $\bp_L\eta=0$.
		The class
		$$
		\xi:=[\eta]\in H^{0,1}(X,L)
		\simeq\operatorname{Ext}^1_{\OO_X}(\OO_X,L)
		$$
		defines an extension
		\begin{equation*}\label{eq:bundle-extension}
			0\longrightarrow L\longrightarrow E_\xi\longrightarrow\OO_X\longrightarrow0.
		\end{equation*}
		Explicitly,
		\begin{equation}\label{eq:bundle}
			E_\xi:=\bigl(L\oplus\OO_X,\bp_{E_\xi}\bigr),\quad
			\bp_{E_\xi}=\begin{pmatrix}\bp_L&\eta\\0&\bp\end{pmatrix},
		\end{equation}
		where $L\oplus\OO_X$ denotes the underlying smooth bundle.
		The equality $\bp_L\eta=0$ gives $\bp_{E_\xi}^2=0$.
		By Lemma \ref{lem:extension-nf}, $E_\xi$ is numerically flat.
		
		\begin{step}
			Non-existence of a holomorphic connection.
		\end{step}
		It remains to prove that $E_\xi$ admits no holomorphic connection.
		For this purpose, we first prove the following lemma.
		
		\begin{lemma}\label{lem:obstruction}
			Let $\Delta\subset\C$ be a disc with coordinate $z_3$, and let
			$T=\C^2/\Lambda$ be a complex torus. For $f_1,f_2\in\OO(\Delta)$, set
			$$
			\eta=f_1(z_3)d\bar z_1+f_2(z_3)d\bar z_2.
			$$
			Let $F$ be the holomorphic rank-two bundle on $\Delta\times T$ whose underlying
			smooth bundle is trivial and whose Dolbeault operator is
			$$
			\bp_F=\begin{pmatrix}\bp&\eta\\0&\bp\end{pmatrix}.
			$$
			If $F$ admits a holomorphic connection, then
			\begin{equation*}\label{eq:obstruction}
				f_1f_2'-f_2f_1'=0\quad\hbox{on }\Delta,
			\end{equation*}
			where primes denote differentiation with respect to $z_3$.
		\end{lemma}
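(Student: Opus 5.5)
We will write a putative holomorphic connection in the global smooth frame of $F$, turn holomorphicity into a system of $\bp$-equations, and then average these equations over the torus fibers to get pointwise ODE constraints on $f_1,f_2$.

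\emph{Setting up the equations.} Write $\bp_F=\bp+N$ with
$$
N=\begin{pmatrix}0&\eta\\0&0\end{pmatrix}.
$$
A holomorphic connection has the form $D=\partial+A$ in the trivial smooth frame, where $A=\begin{pmatrix}a&b\\c&d\end{pmatrix}$ is a matrix of smooth $(1,0)$-forms on $\Delta\times T$. Holomorphicity of $D$ means that $D$ (anti)commutes with $\bp_F$. Using $\partial\bp+\bp\partial=0$, this becomes the graded identity
$$
\bp A+\partial N+[N,A]=0,
$$
to be read up to the sign conventions for graded commutators, which I will fix carefully. Entrywise this gives:
\begin{itemize}
\item $(2,1)$ entry: $\bp c=0$;
\item $(1,1)$ entry: $\bp a\pm\eta\wedge c=0$;
\item $(1,2)$ entry: $\bp b+\partial\eta\pm\eta\wedge(a-d)=0$ (plus possibly a term involving $c$);
\item $(2,2)$ entry: a similar equation for $d$.
\end{itemize}
Here $\partial\eta=f_1'\,dz_3\wedge d\bar z_1+f_2'\,dz_3\wedge d\bar z_2$.

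\emph{Fiber averaging.} For fixed $z_3$, take the component of a $(1,1)$-form along $dz_3\wedge d\bar z_l$ or along $dz_k\wedge d\bar z_l$ ($k,l\in\{1,2\}$), and average its coefficient over $\{z_3\}\times T$ with respect to Haar measure. For $\bp\beta$ with $\beta$ a $(1,0)$-form, these components are $\partial\beta_j/\partial\bar z_l$ with $l\in\{1,2\}$. They are derivatives along the compact fiber, so their averages vanish.

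First, $\bp c=0$ makes the coefficients $c_j$ of $c=\sum_j c_j\,dz_j$ holomorphic. Since the fibers are compact, they depend only on $z_3$. Averaging the $(1,1)$ equation then gives $f_l c_j=0$ for $l\in\{1,2\}$ and $j\in\{1,2,3\}$. If $f_1\equiv f_2\equiv0$, the conclusion is trivial. Otherwise some $f_l$ is nonzero on an open set, so every $c_j$ vanishes there, and hence $c\equiv0$ by the identity theorem. Then $\bp a=\bp d=0$, so $e:=a-d=e_1dz_1+e_2dz_2+e_3dz_3$ has coefficients depending only on $z_3$.

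\emph{Conclusion.} Averaging the $dz_3\wedge d\bar z_l$ component of the $(1,2)$ equation kills $\bp b$. It leaves $f_l'\pm e_3f_l=0$ for $l=1,2$, so $f_l'=\kappa f_l$ with the same function $\kappa=\mp e_3$ for both $l$. Therefore
$$
f_1f_2'-f_2f_1'=\kappa f_1f_2-\kappa f_2f_1=0 .
$$

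\emph{Main obstacle.} The delicate step is deriving the correct matrix equation for holomorphicity of $D$, with its signs and all entries (including any $c$-terms in the $(1,2)$ entry, which vanish once $c=0$ anyway). I would verify it by checking $D(\bp_F s)$ against $\bp_F(Ds)$ on local sections. The fiber-averaging step is routine once one observes that only derivatives in fiber directions appear in the relevant components.
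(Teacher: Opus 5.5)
Your proof is correct, and the signs you left open do not affect it. The holomorphicity condition is $\bar\partial A+\partial N+A\wedge N+N\wedge A=0$. Its $(1,2)$ entry is $\bar\partial b+\partial\eta+\eta\wedge(d-a)=0$, with no $c$-term. The $d\bar z_l\wedge dz_3$ component of that entry reads $\partial b_3/\partial\bar z_l+(d_3-a_3)f_l=f_l'$.

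This is exactly the paper's starting identity, with $a_{12}=b_3$, $a_{11}=a_3$, $a_{22}=d_3$. The paper obtains it by comparing $e_1$-coefficients in $\bar\partial_jD_3e_2=D_3\bar\partial_je_2$.

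The two routes differ in how the unknown $d_3-a_3$ is handled.

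\emph{The paper's route.} It eliminates $d_3-a_3$ pointwise: multiply the $l=2$ equation by $f_1$ and subtract $f_2$ times the $l=1$ equation. This gives $f_1f_2'-f_2f_1'=f_1\,\partial b_3/\partial\bar z_2-f_2\,\partial b_3/\partial\bar z_1$ on $\Delta\times T$, and a single fiber integration finishes. It uses only one matrix entry. It needs no case split, no identity theorem, and no information about $c$ or about how $d_3-a_3$ varies along the fibers.

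\emph{Your route.} You first use the $(2,1)$ and $(1,1)$ entries to show $c\equiv0$ when $\eta\not\equiv0$. Then $a-d$ is holomorphic, hence constant on fibers, and only then do you average. This costs a few extra steps but proves more:
\begin{itemize}
\item Any holomorphic connection preserves the subbundle spanned by $e_1$.
\item $f_l'=\kappa f_l$ for a common $\kappa\in\OO(\Delta)$. So each $f_l$ is either identically zero or zero-free on $\Delta$, and $(f_1,f_2)$ is a constant vector times a nowhere-vanishing function.
\end{itemize}
This is strictly stronger than the vanishing of the Wronskian. For instance, $\eta=z_3\,d\bar z_1$ with $0\in\Delta$ has $f_1f_2'-f_2f_1'=0$, yet your argument excludes a holomorphic connection. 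That is consistent with the fiberwise extension class jumping from nonzero to zero at $z_3=0$.

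For the paper's application, where $f_1f_2'-f_2f_1'=4i$, the shorter elimination argument suffices.
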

		
		\begin{proof}
			Suppose that $F$ admits a holomorphic connection $D$. Let $e_1,e_2$
			be the standard smooth frame of $F$. Put
			$D_3:=D_{\partial/\partial z_3}$ and write
			$$
			D_3e_1=a_{11}e_1+a_{21}e_2,\quad
			D_3e_2=a_{12}e_1+a_{22}e_2,
			$$
			where $a_{kl}\in C^\infty(\Delta\times T)$. Denote by $\bp_j$ the component
			of $\bp_F$ in the $\partial/\partial\bar z_j$ direction. By definition,
			$$
			\bp_je_1=0,\quad \bp_je_2=f_je_1,\quad j=1,2.
			$$
			Since $D$ is holomorphic, we have
			$\bp_jD_3e_2=D_3\bp_je_2$. Comparing the coefficients of
			$e_1$, we obtain
			\begin{equation}\label{eq:two-equations}
				\frac{\partial a_{12}}{\partial\bar z_j}
				+(a_{22}-a_{11})f_j=f_j',\quad j=1,2.
			\end{equation}
			Let $d\mu$ be the translation-invariant volume form on $T$ with
			$\int_Td\mu=1$. Since $f_1,f_2$ depend only on $z_3$, it follows from
			\eqref{eq:two-equations} that
			$$
			\begin{aligned}
				f_1f_2'-f_2f_1'
				&=\int_T\left(f_1\frac{\partial a_{12}}{\partial\bar z_2}
				-f_2\frac{\partial a_{12}}{\partial\bar z_1}\right)d\mu\\
				&=0,
			\end{aligned}
			$$
			where the last equality follows by integration by parts on $T$.
		\end{proof}
		
		
		We now apply Lemma \ref{lem:obstruction} to $E_\xi$. 
		
		Projection onto the first factor gives
		$$
		p:X\rightarrow B:=\C/\Gamma'.
		$$
		Choose a small disc $\Delta\subset B$ and lift it to the $z_3$-plane. Then
		$p^{-1}(\Delta)\simeq\Delta\times T$, and
		$L|_{p^{-1}(\Delta)}$ has a flat holomorphic trivialization.
		By \eqref{eq:eta} and \eqref{eq:bundle}, $E_\xi|_{p^{-1}(\Delta)}$ is the bundle in
		Lemma \ref{lem:obstruction}, with
		$$
		f_1(z_3)=e^{-2iz_3},\quad f_2(z_3)=e^{2iz_3}.
		$$
		We compute
		$$
		f_1f_2'-f_2f_1'=4i\neq0,
		$$
		so Lemma \ref{lem:obstruction} shows that
		$E_\xi|_{p^{-1}(\Delta)}$ admits no holomorphic connection.
		Consequently, neither does $E_\xi$. This completes the proof of
		Theorem \ref{thm:main}.
	\end{proof}

	\begin{proof}[{Proof of Theorem \ref{thm:simply-connected}}]
		Let $E$ be a numerically flat holomorphic vector bundle on $X$.
		By Theorem~\ref{numflatnonkahler}, there exists a filtration
		$
		\{0\}=E_0\subset E_1\subset\cdots\subset E_k=E
		$
		by holomorphic subbundles such that each quotient
		$Q_i:=E_i/E_{i-1}$ is Hermitian flat.
		Since $X$ is simply connected, parallel transport for the flat
		Chern connection on $Q_i$ gives a global holomorphic frame.
		Hence $Q_i\simeq\mathcal O_X^{\oplus r_i}$, where
		$r_i:=\operatorname{rank}Q_i.
		$
		
		We show inductively that the filtration splits holomorphically.
		Suppose that
		$E_{i-1}\simeq\mathcal O_X^{\oplus s_i}$.
		The extension
		$$
		0\longrightarrow E_{i-1}\longrightarrow E_i
		\longrightarrow Q_i\longrightarrow0
		$$
		splits because
		$$
		\operatorname{Ext}^1_{\mathcal O_X}(Q_i,E_{i-1})
		\simeq
		H^1\bigl(X,\operatorname{Hom}(Q_i,E_{i-1})\bigr)
		\simeq
		H^1(X,\mathcal O_X)^{\oplus r_i s_i}
		=0.
		$$
		Choosing these splittings inductively identifies
		$$
		E\simeq\bigoplus_{i=1}^k Q_i,
		\quad
		E_j\simeq\bigoplus_{i=1}^j Q_i.
		$$
		The direct sum of the trivial connections on the $Q_i$ is
		therefore a flat holomorphic connection preserving the filtration.

		Finally, if the Fr\"olicher spectral sequence of $X$ degenerates
		at $E_1$, then simply connectedness gives
		$$
		0=b_1(X)=h^{1,0}(X)+h^{0,1}(X).
		$$
		Hence $H^1(X,\mathcal O_X)=0$. The proof of Theorem \ref{thm:simply-connected} is completed.
	\end{proof}

	The following examples illustrate the scope of
	Theorem~\ref{thm:simply-connected} beyond Fujiki's class
	$\mathcal C$.
	The first satisfies the $\ddbar$-lemma, while the second
	has $E_1$-degeneration but does not satisfy the $\ddbar$-lemma.
	
	\begin{example}[Clemens manifolds]\label{cle}
		We recall Clemens' construction as described in
		\cite[p.~1001]{Friedman}.
		Let $Y$ be a projective Calabi--Yau threefold containing
		disjoint smooth rational curves $C_1,\ldots,C_r$ with
		$$
		N_{C_i/Y}\simeq
		\mathcal O_{\mathbb P^1}(-1)\oplus\mathcal O_{\mathbb P^1}(-1).
		$$
		Assume that their classes span $H^4(Y,\mathbb C)$ and satisfy
		a relation
		$$
		\sum_{i=1}^r m_i[C_i]=0,
		\quad m_i\neq0\quad\text{for every }i.
		$$
		Contracting these curves gives a singular threefold
		$\overline Y$ with ordinary double points.
		Its small smoothings are compact complex threefolds $X$
		with $b_2(X)=0$, called \emph{Clemens manifolds}.
		
		One can choose $Y$ to be simply connected and the classes
		$[C_i]$ to generate $H^4(Y,\mathbb Z)$.
		Then $X$ is diffeomorphic to a connected sum of copies of
		$S^3\times S^3$, and hence is simply connected;
		see \cite[p.~1001]{Friedman}.
		Friedman proved that general Clemens manifolds satisfy the
		$\ddbar$-lemma\footnote{Here ``general'' means that the smoothing
			parameter lies outside a proper real analytic subset.}
		\cite[Corollary~3.8 \& Theorem~3.10]{Friedman}.
		Li subsequently proved that every sufficiently small
		smoothing is a $\ddbar$-manifold \cite{Li}.
		
		These manifolds do not belong to Fujiki's class $\mathcal C$.
		Indeed, by \cite[Lemma 5.15 \& Remark 5.16]{DGMS75}, the $\ddbar$-lemma and $b_2(X)=0$ give
		$$
		H^{1,1}_\textrm{BC}(X)\hookrightarrow
		H^2_{\mathrm{dR}}(X,\mathbb C)=0.
		$$
		On the other hand, a manifold in Fujiki's class $\mathcal C$
		admits a K\"ahler current \cite[Theorem~0.7]{DP},
		whose Bott--Chern class is nonzero.
	\end{example}
	
	\begin{example}[The twistor space of a K3 surface]\label{twistor}
			Let $S$ be a K3 surface equipped with a hyperk\"ahler metric,
			and let $Z$ be the total space of its twistor family over
			$\mathbb P^1$ \cite[\S3 (F)]{HKLR}.
			The compact complex threefold $Z$ is diffeomorphic to
			$S\times S^2$, so it is simply connected.
			Its Fr\"olicher spectral sequence degenerates at $E_1$
			by \cite[Corollary~5.2]{ES}.
			
			However, $Z$ is not a $\ddbar$-manifold.
			Indeed, the cohomology computations of Eastwood--Singer
			\cite{ES} give
			$$
			h^{2,0}(Z)=0,\quad h^{0,2}(Z)=3,
			$$
			which contradicts the Hodge symmetry required by the
			$\ddbar$-lemma.
	\end{example}

	\appendix
\section{Flat twisted-$\ddbar$-lemma on Fujiki manifolds}\label{appendix}
	\begin{proposition}\label{Fujiki}
		Let $X$ be a  Fujiki manifold and let $(F,h_F)$ be any Hermitian flat  holomorphic vector bundle with flat Chern connection $\nabla_F = \nabla_F^{1,0} + \bar\partial_F$. Then the $\nabla_F^{1,0}\bar\partial_F$-lemma holds for $(X,F)$, i.e., for every $\nabla_F$-closed $F$-valued $(p,q)$-form $\alpha$ over $X$, the following are equivalent:
		\[
		\nabla_F\text{-exactness} \;\Longleftrightarrow\; \nabla_F^{1,0}\text{-exactness} \;\Longleftrightarrow\; \bar\partial_F\text{-exactness} \;\Longleftrightarrow\; \nabla_F^{1,0}\bar\partial_F\text{-exactness}.
		\]
	\end{proposition}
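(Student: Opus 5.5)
The plan is to reduce to the compact Kähler case by passing to a modification, and then to prove the Kähler case with the twisted Kähler identities.

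\emph{Kähler case.} First assume $X$ is compact Kähler with Kähler form $\omega$. Since $h_F$ is Hermitian flat, the operators $\nabla_F^{1,0}$ and $\bar\partial_F$ satisfy the usual Kähler identities, for instance $[\Lambda,\bar\partial_F]=-i(\nabla_F^{1,0})^*$; the proof is local and the curvature term vanishes. Consequently
\[
\Delta_{\nabla_F}=2\Delta_{\bar\partial_F}=2\Delta_{\nabla_F^{1,0}},
\]
and $\nabla_F^{1,0}$ anticommutes with $\bar\partial_F^*$. The standard argument of \cite{DGMS75} then goes through verbatim. Let $\alpha$ be a $\nabla_F$-closed $(p,q)$-form that is exact for one of the three operators. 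Its harmonic part, common to all three Laplacians, vanishes, because it is orthogonal to the image of each operator. Hence $\alpha=\bar\partial_F\bar\partial_F^*G\alpha$, and writing $\bar\partial_F^*G\alpha$ as a $\nabla_F^{1,0}$-exact term plus a $\nabla_F^{1,0}$-coexact term yields $\alpha=\nabla_F^{1,0}\bar\partial_F\beta$ up to sign. The reverse implications are trivial.

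\emph{Fujiki case.} Now let $X$ be Fujiki. Choose a modification $\mu:\tilde X\to X$ with $\tilde X$ compact Kähler; for example, use Hironaka resolution to replace a bimeromorphic map by a holomorphic one. Put $\tilde F:=\mu^*F$ with the pulled-back flat metric, so that $\mu^*$ commutes with all three operators. The essential fact is that $\mu^*$ is injective on the relevant cohomologies: twisted de Rham cohomology $H^\bullet(X,\mathcal F)$, where $\mathcal F$ is the local system, twisted Dolbeault cohomology, and twisted $\nabla^{1,0}$-cohomology. I would prove this by constructing a left inverse $\mu_*$, namely pushforward of currents (fiber integration), which is defined for forms with values in a flat bundle because $\tilde F=\mu^*F$. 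By degree one, $\mu_*\mu^*=\mathrm{id}$ at the level of forms and currents, and $\mu_*$ commutes with $\nabla_F$, $\nabla_F^{1,0}$ and $\bar\partial_F$. The cohomologies may be computed by currents, since the twisted complexes of currents are fine resolutions, and these give the same answer.

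\emph{Conclusion of the argument.} Given a $\nabla_F$-closed $\alpha$ on $X$ that is $\bar\partial_F$-exact, say, the form $\mu^*\alpha$ is $\bar\partial_{\tilde F}$-exact. By the Kähler case, $\mu^*\alpha=\nabla^{1,0}\bar\partial\tilde\beta$ for a smooth $\tilde\beta$. Applying $\mu_*$ gives $\alpha=\nabla_F^{1,0}\bar\partial_F(\mu_*\tilde\beta)$, where $\mu_*\tilde\beta$ is only a current. Finally, $\nabla^{1,0}\bar\partial$-exactness in currents implies $\nabla^{1,0}\bar\partial$-exactness in smooth forms. This is the twisted analogue of the statement that Bott--Chern cohomology can be computed by currents, and it follows from the hypoellipticity of the fourth-order Bott--Chern Laplacian twisted by the flat unitary $F$. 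The other equivalences are handled the same way.

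\emph{Main obstacle.} The main obstacle I expect is this last regularity step, together with checking that $\mu_*$ commutes with the twisted operators on currents. For the regularity, I would try the Bott--Chern/Aeppli Hodge theory of Schweitzer twisted by $F$. Its elliptic theory carries over unchanged, since the twist only adds zeroth-order terms.
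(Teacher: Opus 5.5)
Your proposal is correct and takes essentially the same route as the paper: the twisted Kähler case via equality of the Laplacians and the DGMS argument, then a Kähler modification $\mu:\tilde X\to X$ with $\mu_*\mu^*=\mathrm{id}$ on currents and currents computing the cohomology. The only difference is presentation. The paper phrases the Fujiki step as a diagram chase showing that $H_{\mathrm{BC}}(X,F)\to H_{\bar\partial}(X,F)$ and $\bigoplus H_{\mathrm{BC}}\to H_{\mathrm{dR}}$ are isomorphisms, while you argue directly on forms. Your version isolates the one regularity input needed, namely that twisted Bott--Chern cohomology can be computed by currents, which the paper uses only implicitly.
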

	
	\begin{proof}
		If $X$ is K\"ahler, Hermitian flatness  implies, via the Bochner--Kodaira--Nakano identity, that the Laplacians satisfy
		\[
		\square_{\bar\partial_F} = \square_{\nabla^{1,0}_F} = \frac{1}{2}\square_{\nabla_F}.
		\] Hence the classical $\partial\bar\partial$-lemma argument applies to $F$-valued forms, yielding the $\nabla_F^{1,0}\bar\partial_F$-lemma.
		
		\medskip
		Now suppose that $X$ is in Fujiki class $\mathcal C$. We follow the proof of  \cite[$\S$VI, Theorem 12.9]{Deme}. Choose a proper modification $\mu:\widetilde X\to X$ with $\tilde X$ K\"ahler and set $\widetilde F=\mu^*F$. Notice that  this bundle is again Hermitian flat. 
		
		For any smooth $F$-valued form $\alpha$ over $X$, we have $\mu_*\mu^*\alpha = \alpha$ in the sense of  currents, because $\mu$ is a biholomorphism outside a set of measure zero. Consequently, on cohomology $\mu_*$ is surjective and $\mu^*$ is injective, though not necessarily isomorphisms. 
		
		We now have the commutative diagrams
		\[
		\renewcommand{\arraystretch}{1.5}
		\begin{array}{ccc}
			H_{\mathrm{BC}}^{p,q}(\widetilde X,\widetilde F) & \xrightarrow{\;\;\,\,\cong\,\,\;\;} & H^{p,q}_{\bp}(\widetilde X,\widetilde F) \\[1ex]
			{\scriptstyle\mu_*}\Big\downarrow\;\Big\uparrow{\scriptstyle\mu^*} & & {\scriptstyle\mu_*}\Big\downarrow\;\Big\uparrow{\scriptstyle\mu^*} \\[1ex]
			H_{\mathrm{BC}}^{p,q}(X,F) & \xrightarrow{\;\;\quad\,\;\;} & H^{p,q}_{\bp}(X,F)
		\end{array}
		\quad
		\begin{array}{ccc}
			\bigoplus_{p+q=k} H_{\mathrm{BC}}^{p,q}(\widetilde X,\widetilde F) & \xrightarrow{\;\;\,\,\cong\,\,\;\;}  & H_{\mathrm{dR}}^k(\widetilde X,\widetilde F) \\[1ex]
			{\scriptstyle\mu_*}\Big\downarrow\;\Big\uparrow{\scriptstyle\mu^*} & & {\scriptstyle\mu_*}\Big\downarrow\;\Big\uparrow{\scriptstyle\mu^*} \\[1ex]
			\bigoplus_{p+q=k} H_{\mathrm{BC}}^{p,q}(X,F)& \xrightarrow{\;\;\quad\,\;\;} & H_{\mathrm{dR}}^k(X,F),
		\end{array}
		\]	where, for instance, the \textit{flat-twisted Bott--Chern cohomology group} is defined by
		\[
		H_{\mathrm{BC}}^{\bullet,\bullet}(X,F) = \frac{\ker \nabla^{1,0}_F \cap \ker\bar\partial_F}{\operatorname{im} \nabla_F^{1,0}\bar\partial_F}.
		\]
		Notice that the vertical arrows are either both upward or both
		downward. Since the top horizontal arrows are isomorphisms by
		the same argument as in the untwisted case
		(see \cite[Lemma 5.15 \&  Remark 5.16]{DGMS75}),
		the injectivity/surjectivity of the vertical arrows forces the
		bottom horizontal arrows to be isomorphisms as well.   Therefore
		$(X,F)$ satisfies the $\nabla_F^{1,0}\bar\partial_F$-lemma by using \cite{DGMS75} again.
	\end{proof}

	\begin{remark}
		In a forthcoming paper \cite{Zha}, a Tian--Todorov approach
		using Proposition \ref{Fujiki} will be developed to prove that the joint
		deformations of a pair $(X,E)$ are unobstructed when $X$ is a compact
		complex manifold in Fujiki's class $\mathcal C$ with torsion canonical
		bundle and $E\to X$ is a holomorphic vector bundle satisfying
		$H^2\bigl(X,\operatorname{End}^0(E)\bigr)=0$, where
		$\operatorname{End}^0(E)$ denotes the trace-free endomorphism subbundle
		of $\operatorname{End}(E)$.
	\end{remark}

\end{document}